\providecommand{\U}[1]{\protect\rule{.1in}{.1in}}
\newtheorem{theorem}{Theorem}[section]
\newtheorem{lemma}[theorem]{Lemma}
\begin{document}
\title[Uniform estimates for the Schr\"odinger flow]{Uniform estimates for the solutions of the Schr\"odinger equation on the torus
and regularity of semiclassical measures}
\author[T. A\"issiou]{Tayeb A\"issiou}
\address{Department of Mathematics and Statistics, Concordia University, 1455 de
Maisonneuve Blvd. West, Montreal, Quebec, H3G 1M8, Ca\-na\-da.}
\email{aissiou@math.mcgill.ca}
\author[D. Jakobson]{Dmitry Jakobson}
\address{Department of Mathematics and Statistics, McGill University, 805 Sherbrooke
Str. West, Montr\'eal QC H3A 2K6, Ca\-na\-da.}
\email{jakobson@math.mcgill.ca}
\author[F. Maci\`a]{Fabricio Maci\`a}
\address{Universidad Polit\'ecnica de Madrid, DCAIN, ETSI Navales, Avda. Arco de la
Victoria s/n. 28040 Madrid, Spain}
\email{fabricio.macia@upm.es}
\thanks{T.A. was supported by FQRNT. D.J. was supported by NSERC, FQRNT and Dawson
fellowship. F.M. was supported by grant MTM2010-16467 (MEC), and wishes to
acknowledge the support of ICMAT through its visiting faculty program.}
\date{\today}

\begin{abstract}
We establish uniform bounds for the solutions $e^{it\Delta}u$ of the
Schr\"{o}\-din\-ger equation on arithmetic flat tori, generalising earlier
results by J. Bourgain.
We also study the regularity properties of weak-$*$ limits of sequences of
densities of the form $|e^{it\Delta}u_{n}|^{2}$ corresponding to highly
oscillating sequences of initial data $(u_{n})$. We obtain improved regularity
properties of those limits using previous results by N. Anantharaman and F.
Maci\`a on the structure of semiclassical measures for solutions to the
Schr\"{o}dinger equation on the torus.

\end{abstract}
\maketitle

\section{Introduction}

Consider the Schr\"{o}dinger flow $e^{it\Delta}$ on the arithmetic flat
$d$-dimensional torus $\mathbb{T}^{d}:=\mathbb{R}^{d}/2\pi\mathbb{Z}^{d}$.
That is, given any $u\in L^{2}\left(  \mathbb{T}^{d}\right)  $, the function
$\psi(x,t):=e^{it\Delta}u(x)$ is the solution to:%
\begin{equation}
i\partial_{t}\psi(x,t)+\Delta\psi(x,t)=0,\quad\psi(\cdot,0)=u. \label{def:psi}%
\end{equation}
In this article we are be interested in those regularity properties of $\psi$
that can be expressed through the quantity:%
\[
\left\vert \psi(x,t)\right\vert ^{2},
\]
which \emph{a priori}, is a continuous function of $t\in\mathbb{T}$ (as the
Schr\"{o}dinger flow is periodic) taking values in the set of positive
elements in $L^{1}\left(  \mathbb{T}^{d}\right)  $. By conservation of energy,
the total mass of $\left\vert \psi\left(  \cdot,t\right)  \right\vert ^{2}$
equals $\left\Vert u\right\Vert _{L^{2}\left(  \mathbb{T}^{d}\right)  }^{2}$
for every $t\in\mathbb{T}$.

Given $u\in L^{2}\left(  \mathbb{T}^{d}\right)  $, denote by $b_{u}\left(
l,s\right)  $, with $\left(  l,s\right)  \in\mathbb{Z}^{d}\times\mathbb{Z}$,
the Fourier coefficients of $\left\vert \psi\right\vert ^{2}$:
\[
\left\vert e^{it\Delta}u\left(  x\right)  \right\vert ^{2}=\sum_{\left(
l,s\right)  \in\mathbb{Z}^{d}\times\mathbb{Z}}b_{u}\left(  l,s\right)
e^{i\left(  l,s\right)  \cdot\left(  x,t\right)  };
\]
note that $b_{u}$ is a quadratric quantity in $u$. We shall prove the following.

\begin{theorem}
\label{t:ld}There exists a constant $C_{d}>0$ such that:%
\begin{equation}
\left\Vert b_{u}\right\Vert _{l^{d+1}\left(  \mathbb{Z}^{d+1}\right)  }\leq
C_{d}\left\Vert u\right\Vert _{L^{2}\left(  \mathbb{T}^{d}\right)  }^{2},
\label{e:genZyg}%
\end{equation}
for every $u\in L^{2}\left(  \mathbb{T}^{d}\right)  $.
\end{theorem}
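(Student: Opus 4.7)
The plan is to recognise $(b_{u}(l,s))$ as the space-time Fourier coefficients of $|\psi|^{2}$ on the enlarged torus $\mathbb{T}^{d+1}=\mathbb{T}^{d}\times\mathbb{T}$, pass from an $\ell^{d+1}$ bound on the Fourier side to an $L^{(d+1)/d}$ bound on the physical side via Hausdorff-Young, and finally appeal to a (subcritical) space-time Strichartz estimate for $e^{it\Delta}$ on $\mathbb{T}^{d+1}$.

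First I would write $u=\sum_{k\in\mathbb{Z}^{d}}\hat u(k)\,e^{ik\cdot x}$ and expand
\[
|\psi(x,t)|^{2}=\sum_{k,m\in\mathbb{Z}^{d}}\hat u(k)\overline{\hat u(m)}\,e^{i(k-m)\cdot x-i(|k|^{2}-|m|^{2})t},
\]
which yields
\[
b_{u}(l,s)=\sum_{\substack{k-m=l\\ |m|^{2}-|k|^{2}=s}}\hat u(k)\overline{\hat u(m)};
\]
in particular the $b_{u}(l,s)$ are literally the $(l,s)$-Fourier coefficients of the $L^{1}(\mathbb{T}^{d+1})$ function $|\psi|^{2}$.

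Second, since $(d+1)/d\in[1,2]$ is the H\"older conjugate of $d+1$, the Hausdorff-Young inequality on $\mathbb{T}^{d+1}$ gives
\[
\|b_{u}\|_{\ell^{d+1}(\mathbb{Z}^{d+1})}\le\bigl\||\psi|^{2}\bigr\|_{L^{(d+1)/d}(\mathbb{T}^{d+1})}=\|\psi\|_{L^{2(d+1)/d}(\mathbb{T}^{d+1})}^{2}.
\]
Hence it suffices to prove the space-time estimate
\[
\|e^{it\Delta}u\|_{L^{p}(\mathbb{T}^{d+1})}\le C_{d}\|u\|_{L^{2}(\mathbb{T}^{d})},\qquad p=\frac{2(d+1)}{d}.
\]

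This last inequality is the only nontrivial step. In every dimension one has $p=2(d+1)/d<2(d+2)/d$, so $p$ lies strictly below the critical Strichartz exponent on the torus $\mathbb{T}^{d+1}$ conjectured by Bourgain. For $d=1$ the required bound at $p=4$ is the classical Zygmund/Bourgain $L^{4}(\mathbb{T}^{2})$ estimate, which is Theorem \ref{t:ld} in this case. For $d\geq 2$ the desired subcritical bound is contained in the range covered by Bourgain's work on torus Strichartz inequalities (and is in any case obtained by interpolating the trivial $L^{2}$ bound coming from conservation of energy against the critical endpoint bound at $2(d+2)/d$ established via $\ell^{2}$-decoupling). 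Thus the arithmetic structure of the flat torus enters the argument only through this single Strichartz input, which is where I expect the main work and technical difficulty to lie.
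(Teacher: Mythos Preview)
Your reduction via Hausdorff--Young is valid: the bound $\|b_u\|_{\ell^{d+1}}\le C_d\|u\|_{L^2}^2$ would indeed follow from a lossless Strichartz estimate $\|e^{it\Delta}u\|_{L^{2(d+1)/d}(\mathbb{T}^{d+1})}\le C\|u\|_{L^2(\mathbb{T}^d)}$. The gap is that for $d\ge 2$ this estimate, uniform over all $u\in L^2$, is not delivered by the argument you sketch. Bourgain's torus Strichartz inequalities, and the later $\ell^2$-decoupling endpoint at $p=2(d+2)/d$, are frequency-localized bounds carrying an $N^\varepsilon$ factor; interpolating such a lossy endpoint against the trivial $L^2$ conservation law still leaves an $N^{\varepsilon'}$ loss at every intermediate exponent, and that loss obstructs summation over dyadic frequency shells to reach general $u\in L^2$. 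Only when $d=1$ is the required Strichartz known without loss, and there (since $d+1=2$) it is \emph{equivalent} to the theorem by Plancherel, as the paper itself remarks. For $d\ge 2$ you have reduced the theorem to a statement at least as hard as the one to be proved; since Hausdorff--Young runs only in one direction at these exponents, the Strichartz input you invoke is in fact formally stronger than the conclusion.

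The paper's proof works directly on the Fourier side and bypasses Strichartz entirely. For fixed $(l,s)$ with $l\ne 0$ the indices $j$ appearing in $b_u(l,s)=\sum_j a_{j+l}\overline{a_j}$ are constrained to a hyperplane $H_{(l,s)}\subset\mathbb{Z}^d$ orthogonal to $l$. One expands $|b_u(l,s)|^{d+1}$ as a sum over $(j_1,\dots,j_{d+1})\in H_{(l,s)}^{d+1}$, stratifies according to the dimension $r$ of the affine span of the $j_\sigma$, and bounds each stratum by an elementary AM--GM and counting argument (Lemma~\ref{l:estT}), obtaining the explicit constant $C_d=2^{d/(d+1)}$.
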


When $d=1$, Theorem \ref{t:ld} has been proved by Bourgain
\cite{BourgainStrichartz93}, using the methods developed by Cooke
\cite{CookeTorus2d} and Zygmund \cite{Zygmund74}. Note that in that case,
estimate (\ref{e:genZyg}) is equivalent to:%
\begin{equation}
\left\Vert e^{it\partial_{x}^{2}}u\right\Vert _{L^{4}\left(  \mathbb{T}%
_{x}\times\mathbb{T}_{t}\right)  }\leq\left(  C_{1}\right)  ^{1/2}\left\Vert
u\right\Vert _{L^{2}\left(  \mathbb{T}\right)  }. \label{e:Stric}%
\end{equation}
When $d\geq2$, Bourgain has proved in \cite{BourgainStrichartz93, Bourgain11}
frequency-dependent generalizations of estimate (\ref{e:Stric}) that are
optimal in many cases. When the initial datum $u(x)$ is an eigenfunction of
$\Delta$:
\begin{equation}
\Delta u+\lambda u=0,\quad\left\Vert u\right\Vert _{L^{2}\left(
\mathbb{T}^{d}\right)  }=1; \label{e:eigenf}%
\end{equation}
then $e^{it\Delta}u=e^{-it\lambda}u$, which implies $|e^{it\Delta}%
u|^{2}=|u|^{2}$. For that class of initial data, the Fourier coefficients
$b_{u}\left(  l\right)  $ only depend on $l\in\mathbb{Z}^{d}$ and the exponent
in estimate (\ref{e:genZyg}) can be improved to:%
\begin{equation}
\left\Vert b_{u}\right\Vert _{l^{d}\left(  \mathbb{Z}^{d}\right)  }\leq K_{d},
\label{e:reigen}%
\end{equation}
where the constant $K_{d}$ is independent of the eigenvalue $\lambda$. This
result was established in \cite{CookeTorus2d, Zygmund74} for $d=2$ and in
\cite{JakobsonTori97, JakNadToth01, Aissiou} for $d\geq3$.

Theorem \ref{t:ld} shows that as a function of space and time, $e^{it\Delta}u$
is more regular than being merely a function in $C\left(  \mathbb{T}%
;L^{2}\left(  \mathbb{T}^{d}\right)  \right)  $, but this regularity is only
expressed in terms of the summability properties of the Fourier coefficients
of its modulus square. The proof of Theorem \ref{t:ld} relies on ideas in
\cite{JakobsonTori97}; it is presented in Section \ref{s:est}, along with a
straightforward generalisation to the density-matrix Schr\"{o}dinger equation
(Theorem \ref{t:lddm}). \bigskip

Next, we state our second result. Suppose $\left(  u_{n}\right)  $ is a
bounded sequence in $L^{2}\left(  \mathbb{T}^{d}\right)  $. Then the sequence%
\[
\left(  \left\vert e^{it\Delta}u_{n}\right\vert ^{2}\right)  \subset C\left(
\mathbb{T};L^{1}\left(  \mathbb{T}^{d}\right)  \right)  ,
\]
is uniformly bounded. It is always possible to extract a subsequence $\left(
u_{n^{\prime}}\right)  $ such that that the $\left\vert e^{it\Delta
}u_{n^{\prime}}\right\vert ^{2}$ converge to some positive measure $\nu\in
L^{\infty}\left(  \mathbb{T};\mathcal{M}_{+}\left(  \mathbb{T}^{d}\right)
\right)  $ in the weak-$\ast$ topology,\footnote{We have denoted by
$\mathcal{M}_{+}\left(  \mathbb{T}^{d}\right)  $ the set of positive Radon
measures on $\mathbb{T}^{d}$.} \emph{i.e.}%
\begin{equation}
\lim_{n^{\prime}\rightarrow\infty}\int_{\mathbb{T}\times\mathbb{T}^{d}}%
\phi\left(  t\right)  a\left(  x\right)  \left\vert e^{it\Delta}u_{n^{\prime}%
}\left(  x\right)  \right\vert ^{2}dtdx=\int_{\mathbb{T}}\int_{\mathbb{T}^{d}%
}\phi\left(  t\right)  a\left(  x\right)  \nu\left(  dx,t\right)  dt,
\label{e:wc}%
\end{equation}
for every $\phi\in L^{1}\left(  \mathbb{T}\right)  $ and every $a\in C\left(
\mathbb{T}^{d}\right)  $. In what follows, we shall say that a measure $\nu$
is a \emph{weak-}$\ast$ \emph{accumulation point }of the sequence of densities
$\left(  \left\vert e^{it\Delta}u_{n}\right\vert ^{2}\right)  $ if
(\ref{e:wc}) holds for some subsequence.

Our next result deals with the regularity properties of those measures $\nu$
that arise as weak-$\ast$ accumulation points of sequences of densities of
orbits of the Schr\"{o}dinger flow. A result by Bourgain \cite{BourgainQL97}
asserts that in fact all weak-$\ast$ accumulation points $\nu$ are absolutely
continuous with respect to the Lebesgue measure, \emph{i.e.} one always has
$\nu\in L^{\infty}\left(  \mathbb{T};L^{1}\left(  \mathbb{T}^{d}\right)
\right)  $. This result was further generalised by Anantharaman and Maci\`{a}
\cite{AnantharamanMacia} to the case of the Laplacian plus a potential;
moreover, these authors deduced a propagation law for $\nu\left(
\cdot,t\right)  $ and clarified the dependence of $\nu$ on the sequence of
initial data $\left(  u_{n}\right)  $ (these results generalise those in
\cite{MaciaTorus}).

In addition, because of estimate (\ref{e:genZyg}) one easily sees that such a
$\nu$ also satisfies $\left\Vert \widehat{\nu}\right\Vert _{l^{d+1}\left(
\mathbb{Z}^{d+1}\right)  }<\infty$, where $\widehat{\nu}\left(  l,s\right)  $,
$\left(  l,s\right)  \in\mathbb{Z}^{d}\times\mathbb{Z}$, stand for the Fourier
coefficients of $\nu$. These two facts express that the regularity properties
of $\nu$ are as good as those of the densities $\left\vert e^{it\Delta}%
u_{n}\right\vert ^{2}$. And this cannot be improved at this level of
generality, since if a sequence $\left(  u_{n}\right)  $ converges strongly to
some $u$ in $L^{2}\left(  \mathbb{T}^{d}\right)  $ then $\nu\left(
\cdot,t\right)  =\left\vert e^{it\Delta}u\right\vert ^{2}$.

However, the regularity of the weak-$\ast$ accumulation points can be improved
if we restrict ourselves to sequences $\left(  u_{n}\right)  $ that are highly
oscillating. This is for instance the case when $\left(  u_{n}\right)  $
consists of eigenfunctions of the Laplacian (\ref{e:eigenf}) corresponding to
eigenvalues $\lambda=\lambda_{n}\rightarrow\infty$ as $n\rightarrow\infty$.
If
\[
\nu\left(  x\right)  =\sum_{l\in\mathbb{Z}^{d}}\widehat{\nu}\left(  l\right)
e^{il\cdot x}%
\]
is an accumulation point of $\left(  \left\vert e^{it\Delta}u_{n}\right\vert
^{2}\right)  =\left(  \left\vert u_{n}\right\vert ^{2}\right)  $ then $\nu$ is
more regular than \emph{a priori} expected. It is constant if $d=1$; it is a
trigonometric polynomial if $d=2$, as shown by Jakobson \cite{JakobsonTori97};
and for $d\geq3$ it satisfies (see again \cite{JakobsonTori97}):%
\[
\left\Vert \widehat{\nu}\right\Vert _{l^{d-2}\left(  \mathbb{Z}^{d}\right)
}<\infty,
\]
which is an improvement by two in the summability exponent with respect to the
corresponding estimate (\ref{e:reigen}) for eigenfunctions.

Here we shall deal with sequences of initial data that are more general than
eigenfunctions but still exhibit oscillating behavior around some
characteristic frequencies. More precisely, we shall assume that the sequence
of initial data $\left(  u_{n}\right)  $ satisfies the following condition:

\begin{itemize}
\item[(\textbf{S})] There exists a sequence of positive reals $\left(
h_{n}\right)  $ that tends to zero such that:%
\begin{equation}
\limsup_{n\rightarrow\infty}\sum_{\left\vert l\right\vert <\delta/h_{n}%
}\left\vert \widehat{u_{n}}\left(  l\right)  \right\vert ^{2}\rightarrow
0,\quad\text{as }\delta\rightarrow0^{+}\text{,} \label{e:ho1}%
\end{equation}
and%
\begin{equation}
\limsup_{n\rightarrow\infty}\sum_{\left\vert l\right\vert >R/h_{n}}\left\vert
\widehat{u_{n}}\left(  l\right)  \right\vert ^{2}\rightarrow0,\quad\text{as
}R\rightarrow\infty. \label{e:ho2}%
\end{equation}

\end{itemize}

Above, $\widehat{u_{n}}\left(  l\right)  $, $l\in\mathbb{Z}^{d}$, are the
Fourier coefficients of $u_{n}$. This condition ensures that the $L^{2}$-norm
of $u_{n}$ asymptotically localised on frequencies of the order of $1/h_{n}$.
Condition (\textbf{S}) has been introduced in \cite{GerardSobolev} (in the
context of $\mathbb{R}^{d}$) under the name of $\left(  h_{n}\right)
$-oscillation. We refer the reader to example 2.3 of \cite{GerardSobolev} for
the construction of a sequence weakly converging to zero in $L^{2}\left(
\mathbb{R}^{d}\right)  $ such that the Euclidean version of property
(\textbf{S}) fails.

\begin{theorem}
\label{t:limit}Suppose that $\left(  u_{n}\right)  $ is a bounded sequence in
$L^{2}\left(  \mathbb{T}^{d}\right)  $ that satisfies condition (\textbf{S})
above. Let $\nu\in L^{\infty}\left(  \mathbb{T};L^{1}\left(  \mathbb{T}%
^{d}\right)  \right)  $ be a weak-$\ast$ accumulation point of $\left(
\left\vert e^{it\Delta}u_{n}\right\vert ^{2}\right)  $. If $\widehat{\nu
}\left(  l,s\right)  $, $\left(  l,s\right)  \in\mathbb{Z}^{d}\times
\mathbb{Z}$ are the Fourier coefficients of $\nu\left(  t,x\right)  $ then:%
\[
\left\Vert \widehat{\nu}\right\Vert _{l^{d}\left(  \mathbb{Z}^{d+1}\right)
}<\infty.
\]

\end{theorem}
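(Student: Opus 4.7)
Testing the weak-$*$ convergence (\ref{e:wc}) against the characters $\phi(t) a(x) = e^{i(st + l \cdot x)}$ identifies $\widehat{\nu}(l, s) = \lim_{n' \to \infty} b_{u_{n'}}(l, s)$ along the subsequence extracting $\nu$. Fatou's lemma applied to $l^d(\mathbb{Z}^{d+1})$ then reduces the theorem to the uniform estimate
\[
\sup_n \|b_{u_n}\|_{l^d(\mathbb{Z}^{d+1})} < \infty
\]
for every $L^2$-bounded sequence $(u_n)$ satisfying (\textbf{S}). Writing $u_n = \sum_j a_n(j) e^{ij \cdot x}$, one computes
\[
b_{u_n}(l, s) = \sum_{\substack{j \in \mathbb{Z}^d \\ 2 j \cdot l = s + |l|^2}} a_n(j) \overline{a_n(j - l)},
\]
so $b_{u_n}(l, s)$ is a quadratic sum in the Fourier coefficients over lattice points on the affine hyperplane $\Lambda(l, s) := \{j \in \mathbb{Z}^d : 2 j \cdot l = s + |l|^2\}$.

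The heart of the argument exploits the annular concentration provided by (\textbf{S}). Up to an arbitrarily small $L^2$-error we may assume that $a_n$ is supported in $A_n := \{\delta / h_n \leq |j| \leq R / h_n\}$, so only lattice points in $A_n \cap \Lambda(l, s)$ contribute to $b_{u_n}(l, s)$. The proof of Theorem \ref{t:ld} (following \cite{JakobsonTori97}) controls these sums using only the $(d-1)$-dimensional hyperplane $\Lambda(l, s)$, which is what produces the $l^{d+1}$ exponent; the additional restriction $|j| \asymp 1 / h_n$ functions as a second defining equation and confines the relevant lattice points to a $(d-2)$-dimensional intersection. This is exactly the sphere-versus-hyperplane improvement that upgrades (\ref{e:genZyg}) to (\ref{e:reigen}) for genuine eigenfunctions. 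Combining the refined count with Cauchy-Schwarz and the trivial identity
\[
\sum_{l, s} \sum_{j \in \Lambda(l, s)} |a_n(j)|^2 |a_n(j - l)|^2 = \|u_n\|_{L^2}^4
\]
should then deliver the desired uniform $l^d$ bound.

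The main obstacle will be making this dimensional reduction uniform in $(l, s)$ and $h_n$: the hyperplane $\Lambda(l, s)$ need not meet the annulus $A_n$ transversally when $l$ is short or when $s$ lies near the boundary of the admissible range $|s| \lesssim |l|/h_n + |l|^2$, and in those regimes one must invoke the $(h_n)$-oscillation more carefully to absorb the degenerate contributions. A complementary route, which the structure of the problem strongly suggests, is to invoke the decomposition theorem for semiclassical measures of Anantharaman-Maci\`a \cite{AnantharamanMacia}: it expresses $\nu$ as a sum, indexed by primitive rational subspaces $\Lambda \subseteq \mathbb{Z}^d$, of densities supported on sub-tori of dimension $\dim \Lambda$, each satisfying an eigenfunction-type bound on its own sub-torus; assembling the resulting $l^{\dim \Lambda}$ estimates from \cite{JakobsonTori97, JakNadToth01, Aissiou} over the decomposition gives back the global $l^d(\mathbb{Z}^{d+1})$ control.
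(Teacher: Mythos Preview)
Your primary approach has a genuine gap. The claim that ``the additional restriction $|j|\asymp 1/h_n$ functions as a second defining equation and confines the relevant lattice points to a $(d-2)$-dimensional intersection'' is false: condition (\textbf{S}) localises the Fourier support to a \emph{thick} annulus $A_n=\{\delta/h_n\le|j|\le R/h_n\}$, which has full dimension $d$, not to a sphere. Hence $A_n\cap\Lambda(l,s)$ is still a $(d-1)$-dimensional slab containing on the order of $(1/h_n)^{d-1}$ lattice points, and no improvement over the hyperplane count underlying Theorem~\ref{t:ld} is available. The sphere--hyperplane mechanism behind (\ref{e:reigen}) genuinely requires the codimension-one constraint $|j|^2=\lambda$; an annulus of width comparable to its radius gives nothing. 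In fact there is no reason to expect the uniform bound $\sup_n\|b_{u_n}\|_{l^d}<\infty$ under (\textbf{S}) alone---the improvement to $l^d$ is a property of the \emph{limit} $\nu$, not of the individual densities $|e^{it\Delta}u_n|^2$, so the Fatou reduction is aiming at the wrong target.

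Your ``complementary route'' is essentially the paper's argument, but two points need to be fixed. First, the pieces $\rho_\Lambda$ in the Anantharaman--Maci\`a decomposition are density matrices and the relevant objects $t_{e^{it\Delta}\rho_\Lambda e^{-it\Delta}}$ are time-dependent; the applicable estimate is therefore the density-matrix Schr\"odinger bound of Theorem~\ref{t:lddm}, which yields $\|b_{\rho_\Lambda}\|_{l^{\operatorname{rk}\Lambda+1}}\le C\,\|t_{\rho_\Lambda}\|_{L^1}$, \emph{not} the static eigenfunction bounds $l^{\operatorname{rk}\Lambda}$ of \cite{JakobsonTori97,JakNadToth01,Aissiou}. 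Second---and this is where (\textbf{S}) actually enters---the decomposition \emph{a priori} includes the full-rank term $\rho_{\mathbb{Z}^d}$, for which Theorem~\ref{t:lddm} only gives $l^{d+1}$. The role of (\textbf{S}) is precisely to force $\rho_{\mathbb{Z}^d}=0$: it implies $\mu_0(\mathbb{T}^d\times\{0\})=0$ for every semiclassical measure of $(u_n)$, and then Corollary~30 of \cite{AnantharamanMacia} kills the top-rank piece. Only submodules of rank at most $d-1$ remain in the sum, and summing the resulting $l^{\operatorname{rk}\Lambda+1}\subset l^d$ bounds against $\sum_\Lambda\|t_{\rho_\Lambda}\|_{L^1}\le\limsup_n\|u_n\|_{L^2}^2$ gives the theorem.
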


Therefore, if one imposes condition (\textbf{S}) on the sequence of initial
data, the accumulation points $\nu$ enjoy more regularity than that a priori
expressed by Theorem \ref{t:ld}. The proof of this result is presented in
Section \ref{s:reg}; it is based on two ingredients: estimate (\ref{e:genZyg})
in its matrix-density version; second, the results on the structure of
semiclassical measures for the Schr\"{o}dinger flow in \cite{MaciaTorus,
AnantharamanMacia} that allow us to lower the dimension by one.

If one consider sequences of initial data that satisfy conditions that are
more restrictive than (\textbf{S}) it is possible to show additional
regularity properties on the corresponding weak-$\ast$ accumulation points.
This is the scope of Theorem \ref{t:limitr}, which is stated and proved in the
second part of Section \ref{s:reg}.

\noindent\textbf{Acknowledgment. }The authors would like to thank Patrick
G\'{e}rard for pointing out an inaccuracy in a previous version of this article.


\section{Uniform estimates for the Schr\"{o}dinger flow\label{s:est}}

We first prove Theorem \ref{t:ld}; then we state a straightforward
generalization of this result to solutions to the density-matrix
Schr\"{o}dinger equation that will be used in the proof of Theorem
\ref{t:limit}.

\subsection{Proof of Theorem \ref{t:ld}\label{subsec:proof-ld}}

Let $u\in L^{2}\left(  \mathbb{T}^{d}\right)  $ given by
\[
u\left(  x\right)  =\sum_{k\in\mathbb{Z}^{d}}a_{k}e^{ik\cdot x};
\]
to lighten our writing, we shall use the following normalization of the
$L^{2}$-norm:%
\[
\left\Vert u\right\Vert _{L^{2}\left(  \mathbb{T}^{d}\right)  }^{2}%
:=\sum_{k\in\mathbb{Z}^{d}}\left\vert a_{k}\right\vert ^{2}=\int
_{\mathbb{T}^{d}}\left\vert u\left(  x\right)  \right\vert ^{2}\frac
{dx}{\left(  2\pi\right)  ^{d}}.
\]
Consider the following set of lattice points:%
\[
\mathcal{P}:=\left\{  \left(  k,-\left\vert k\right\vert ^{2}\right)
\;:\;k\in\mathbb{Z}^{d}\right\}  \subset\mathbb{Z}^{d+1};
\]
this is precisely the set of lattice points contained in the paraboloid of
$\mathbb{R}^{d+1}$ obtained as the graph of $-\left\vert x\right\vert ^{2}$
for $x\in\mathbb{R}^{d}$. We shall denote a generic point in $\mathbb{Z}%
^{d+1}$ as $\left(  k,n\right)  $ with $k\in\mathbb{Z}^{d}$ and $n\in
\mathbb{Z}$.

The orbit of the Schr\"{o}dinger flow corresponding to $u$ is:%
\begin{equation}
e^{it\Delta}u\left(  x\right)  =\sum_{k\in\mathbb{Z}^{d}}a_{k}e^{i\left(
k\cdot x-\left\vert k\right\vert ^{2}t\right)  }=\sum_{\left(  k,n\right)
\in\mathcal{P}}a_{k}e^{i\left(  k,n\right)  \cdot\left(  x,t\right)  },
\label{e:schrod}%
\end{equation}
and,%
\[
\left\vert e^{it\Delta}u\left(  x\right)  \right\vert ^{2}=\sum_{\left(
l,s\right)  \in\mathbb{Z}^{d+1}}b_{u}\left(  l,s\right)  e^{i\left(
l,s\right)  \cdot\left(  x,t\right)  },
\]
where, for $\left(  l,s\right)  \in\mathbb{Z}^{d+1}$ we have set:%
\begin{equation}
b_{u}\left(  l,s\right)  :=\sum_{\substack{k-j=l,\\\left\vert j\right\vert
^{2}-\left\vert k\right\vert ^{2}=s}}a_{k}\overline{a_{j}}=\sum
_{\substack{\left(  k,n\right)  -\left(  j,m\right)  =\left(  l,s\right)
,\\\left(  k,n\right)  ,\left(  j,m\right)  \in\mathcal{P}}}a_{k}%
\overline{a_{j}}; \label{e:msquare}%
\end{equation}
note that the range of the sum above might be empty, in which case we set
$b_{u}\left(  l,s\right)  :=0$. Clearly
\begin{equation}
b_{u}\left(  0,0\right)  =\left\Vert u\right\Vert _{L^{2}\left(
\mathbb{T}^{d}\right)  }^{2},\quad b_{u}\left(  0,s\right)  =0\text{ for }%
s\in\mathbb{Z}\setminus\left\{  0\right\}  , \label{e:trivcoefs}%
\end{equation}

We start by making some elementary geometric remarks. Let $j\in\mathbb{Z}^{d}$
and suppose that there exist $\left(  l,s\right)  \in\mathbb{Z}^{d+1}%
\setminus\left\{  \left(  0,0\right)  \right\}  $ and $k\in\mathbb{Z}^{d}$
such that $k-j=l$ and $\left\vert j\right\vert ^{2}-\left\vert k\right\vert
^{2}=s$. This is the same as saying that $\left(  l,s\right)  $ is a chord of
the discrete paraboloid $\mathcal{P}$ with origin at $(j,-\left\vert
j\right\vert ^{2})$. We can rewrite the condition on the squares of the
lengths of $k$ and $j$ as:%
\begin{equation}
s=-l\cdot\left(  k+j\right)  =-l\cdot\left(  l+2j\right)  =-\left\vert
l\right\vert ^{2}-2l\cdot j. \label{e:s}%
\end{equation}
Therefore, the set of all $j\in\mathbb{Z}^{d}$ such that $(j,-\left\vert
j\right\vert ^{2})$ is the origin of a chord $\left(  l,s\right)  $ on the
paraboloid $\mathcal{P}$ is equal to
\[
H_{\left(  l,s\right)  }:=\left\{  j\in\mathbb{Z}^{d}\;:\;j\cdot\frac
{l}{\left\vert l\right\vert }=-\frac{1}{2}\left(  \frac{s}{\left\vert
l\right\vert }+\left\vert l\right\vert \right)  \right\}  ,
\]
which is the set of lattice points that lie on a certain hyperplane of
$\mathbb{R}^{d}$ which is orthogonal to $l$ and at whose distance from the
origin is determined by $\left\vert l\right\vert $ and $s$. Note in particular
that
\[
H_{\left(  l,s\right)  }\cap H_{\left(  l,s^{\prime}\right)  }\neq
\emptyset\;\Leftrightarrow\;s=s^{\prime}.
\]
We summarize the preceding geometric discussion as:

\begin{center}
\emph{Let }$\left(  k,n\right)  ,\left(  j,m\right)  \in\mathcal{P}$
\emph{and} $\left(  l,s\right)  \in\mathbb{Z}^{d+1}$ \emph{with }$l\neq0$.
\emph{That }$\left(  k,n\right)  -\left(  j,m\right)  =\left(  l,s\right)  $
\emph{is equivalent to }$j\in H_{\left(  l,s\right)  }$ \emph{where} $s$
\emph{is given by (\ref{e:s}) and }$k=j+l.$
\end{center}

Now, by (\ref{e:trivcoefs}),
\begin{equation}
\sum_{\left(  l,s\right)  \in\mathbb{Z}^{d+1}}\left\vert b_{u}\left(
l,s\right)  \right\vert ^{d+1}=\left\Vert u\right\Vert _{L^{2}\left(
\mathbb{T}^{d}\right)  }^{2\left(  d+1\right)  }+\sum_{\left(  l,s\right)
\in\mathbb{Z}^{d+1},l\neq0}\left\vert b_{u}\left(  l,s\right)  \right\vert
^{d+1}. \label{e:bd}%
\end{equation}
Our goal will be to estimate the second term in the above sum by $\left\Vert
u\right\Vert _{L^{2}\left(  \mathbb{T}^{d}\right)  }^{2\left(  d+1\right)  }$.
We have proved above that, for $l\not =0$, we can rewrite (\ref{e:msquare})
as:%
\[
b_{u}\left(  l,s\right)  =\sum_{j\in H_{\left(  l,s\right)  }}a_{j+l}%
\overline{a_{j}},
\]
and therefore,
\begin{equation}
\left\vert b_{u}\left(  l,s\right)  \right\vert ^{d+1}\leq\sum_{j_{1}%
,...,j_{d+1}\in H_{\left(  l,s\right)  }}%
{\displaystyle\prod_{\sigma=1}^{d+1}}
\left\vert a_{j_{\sigma}+l}a_{j_{\sigma}}\right\vert . \label{e:sumb}%
\end{equation}

Let $\delta\geq0$ be an integer. For any $r\in\left\{  0,1,..,\delta
-1\right\}  $ denote by $\mathcal{V}_{r}^{\delta}$ the set of the $\left(
j_{1},...,j_{\delta+1}\right)  \in\mathbb{Z}^{d\left(  \delta+1\right)  }$
such that $j_{1},...,j_{\delta+1}$ span an affine variety in $\mathbb{R}^{d}$
of dimension $r$. We can rewrite (\ref{e:sumb}) as:%
\[
\left\vert b_{u}\left(  l,s\right)  \right\vert ^{d+1}\leq\sum_{r=0}^{d-1}%
\sum_{\left(  j_{1},...,j_{d+1}\right)  \in\left(  H_{\left(  l,s\right)
}\right)  ^{d+1}\cap\mathcal{V}_{r}^{d}}%
{\displaystyle\prod_{\sigma=1}^{d+1}}
\left\vert a_{j_{\sigma}+l}a_{j_{\sigma}}\right\vert \text{.}%
\]
Therefore, in order to have a bound for $\left\Vert b_{u}\right\Vert
_{l^{d+1}}^{d+1}$ it suffices to estimate, for each $r=0,1,...,d-1$, the term:%
\begin{equation}
T_{r,d}\left(  u\right)  :=\sum_{\substack{\left(  l,s\right)  \in
\mathbb{Z}^{d+1},\\l\neq0}}\sum_{\left(  j_{1},...,j_{d+1}\right)  \in\left(
H_{\left(  l,s\right)  }\right)  ^{d+1}\cap\mathcal{V}_{r}^{d}}%
{\displaystyle\prod_{\sigma=1}^{d+1}}
\left\vert a_{j_{\sigma}+l}a_{j_{\sigma}}\right\vert , \label{e:rfixed}%
\end{equation}
which involves only indices $j_{1},...,j_{d+1}$ that span an affine variety of
dimension $r$.

This is a consequence of the next lemma, which is slightly more general than
what is needed at this point, but that will also be needed in the proof of
Theorem \ref{t:lddm} in the next paragraph.

\begin{lemma}
\label{l:estT}Let $\delta$ and $r$ be integers with $0\leq\delta\leq d$ and
$0\leq r\leq\delta-1$. For every $u\in L^{2}\left(  \mathbb{T}^{d}\right)  $
given by $u\left(  x\right)  =\sum_{k\in\mathbb{Z}^{d}}a_{k}e^{ik\cdot x}$
define:
\begin{equation}
T_{r,\delta}\left(  u\right)  :=\sum_{\substack{\left(  l,s\right)
\in\mathbb{Z}^{d+1},\\l\neq0}}\sum_{\left(  j_{1},...,j_{\delta+1}\right)
\in\left(  H_{\left(  l,s\right)  }\right)  ^{\delta+1}\cap\mathcal{V}%
_{r}^{\delta}}%
{\displaystyle\prod_{\sigma=1}^{\delta+1}}
\left\vert a_{j_{\sigma}+l}a_{j_{\sigma}}\right\vert . \label{e:fdeltafixed}%
\end{equation}
Then%
\[
T_{r,\delta}\left(  u\right)  \leq\dbinom{\delta}{r}\left\Vert u\right\Vert
_{L^{2}\left(  \mathbb{T}^{d}\right)  }^{2\left(  \delta+1\right)  }.
\]

\end{lemma}

\begin{proof}
The case $r=0$ is simple to estimate. Notice that given $\left(  k,j\right)
\in\mathbb{Z}^{d}\times\mathbb{Z}^{d}$ with $k\neq j$ there exist a unique
$\left(  l,s\right)  \in\mathbb{Z}^{d+1}$ with $l\neq0$ such that $j\in
H_{\left(  l,s\right)  }$ and $k=j+l$. Therefore:%
\begin{align*}
T_{0,\delta}\left(  u\right)   &  =\sum_{\substack{\left(  l,s\right)
\in\mathbb{Z}^{d+1}\\l\neq0}}\sum_{j\in H_{\left(  l,s\right)  }}\left\vert
a_{j+l}a_{j}\right\vert ^{\delta+1}\\
&  =\sum_{\substack{\left(  k,j\right)  \in\mathbb{Z}^{d}\times\mathbb{Z}%
^{d}\\k\neq j}}\left\vert a_{k}a_{j}\right\vert ^{\delta+1}\leq\left(
\sum_{j\in\mathbb{Z}^{d}}\left\vert a_{j}\right\vert ^{\delta+1}\right)
^{2}\leq\left\Vert u\right\Vert _{L^{2}\left(  \mathbb{T}^{d}\right)
}^{2\left(  \delta+1\right)  }.
\end{align*}
Now suppose $1\leq r\leq\delta-1$ and consider a fixed summand in
(\ref{e:fdeltafixed}):%
\begin{equation}%
{\displaystyle\prod_{\sigma=1}^{\delta+1}}
\left\vert a_{j_{\sigma}+l}a_{j_{\sigma}}\right\vert ,\label{e:prod}%
\end{equation}
corresponding to some $\left(  j_{1},...,j_{\delta+1}\right)  \in
\mathcal{V}_{r}^{\delta}$; denote by $\mathcal{L}$ the $r$-dimensional affine
variety spanned by the $j_{\sigma}$'s. Choose the least indices $1=\alpha
_{1}<...<\alpha_{r+1}\leq\delta+1$ such that $j_{\alpha_{1}},...,j_{\alpha
_{r+1}}$ span $\mathcal{L}$, and denote by $\beta_{1}<...<\beta_{\delta-r}$
the remaining indices. We now estimate (\ref{e:prod}) by:%
\begin{equation}
\frac{1}{2}\left(
{\displaystyle\prod_{i=1}^{r+1}}
\left\vert a_{j_{\alpha_{i}}}\right\vert ^{2}%
{\displaystyle\prod_{i=1}^{\delta-r}}
\left\vert a_{j_{\beta_{i}}+l}\right\vert ^{2}+%
{\displaystyle\prod_{i=1}^{r+1}}
\left\vert a_{j_{\alpha_{i}}+l}\right\vert ^{2}%
{\displaystyle\prod_{i=1}^{\delta-r}}
\left\vert a_{j_{\beta_{i}}}\right\vert ^{2}\right)  .\label{e:ts}%
\end{equation}
We shall perform our analysis focusing on the first summand. It will be clear
that the second summand can be dealt with in a completely analogous manner.

Given $\left(  j_{1},...,j_{\delta+1}\right)  \in\mathcal{V}_{r}^{\delta}$ we
define $\left(  \tilde{j}_{1},...,\tilde{j}_{\delta+1}\right)  \in
\mathcal{V}_{r+1}^{\delta}$ as follows: set $\tilde{j}_{i}=j_{\alpha_{i}}$ for
$i=1,...,r+1$ and $\tilde{j}_{i}=j_{\beta_{i}}+l$ for $i=r+2,...,\delta+1$.
Thus, we can put the first summand in (\ref{e:ts}) as:%
\[%
{\displaystyle\prod_{i=1}^{\delta+1}}
\left\vert a_{\tilde{j}_{i}}\right\vert ^{2},\quad\text{for some }\left(
\tilde{j}_{1},...,\tilde{j}_{\delta+1}\right)  \in\mathcal{V}_{r+1}^{\delta}.
\]
In order to estimate $T_{r,\delta}\left(  u\right)  $ by a sum of terms of
this form over indices in $\mathcal{V}_{r+1}^{\delta}$ we should take into
account the following:

\noindent1. A $\left(  \delta+1\right)  $-tuple $\left(  \tilde{j}%
_{1},...,\tilde{j}_{\delta+1}\right)  \in\mathcal{V}_{r+1}^{\delta}$ is
obtained from $\left(  j_{1},...,j_{\delta+1}\right)  \in\mathcal{V}%
_{r}^{\delta}$ by applying a permutation that maps monotonically the indices
$1=\alpha_{1}<...<\alpha_{r+1}$ into $1,..,r+1$ and the indices $\beta
_{1},...,\beta_{\delta-r}$ into $r+2,...,\delta+1$. Note that there are
$\dbinom{\delta}{r}$ such permutations.

\noindent2. Suppose $\left(  \tilde{j}_{1},...,\tilde{j}_{\delta+1}\right)  $
is obtained from $\left(  j_{1},...,j_{\delta+1}\right)  \in\mathcal{V}%
_{r}^{\delta}$. Then there exists a unique $\left(  l,s\right)  \in
\mathbb{Z}^{d+1}$, $l\neq0$, such that $\left(  j_{1},...,j_{\delta+1}\right)
\in\left(  H_{\left(  l,s\right)  }\right)  ^{\delta+1}$. This is due to the
fact that that $l$ is a direction contained in the $\left(  r+1\right)
$-dimensional affine variety spanned by $\tilde{j}_{1},...,\tilde{j}_{r+2}$
and that $l$ must be orthogonal to
\[
\mathcal{L}:=\operatorname{span}\left\{  j_{1},...,j_{\delta+1}\right\}
=\operatorname{span}\left\{  \tilde{j}_{1},...,\tilde{j}_{r+1}\right\}  .
\]
Therefore, it must coincide with the orthogonal projection of, say, $\tilde
{j}_{r+2}-\tilde{j}_{1}$ onto the variety that is orthogonal to $\mathcal{L}$
and contains $j_{1}=\tilde{j}_{1}$. Once $l$ is determined uniquely so is $s$,
by (\ref{e:s}).

As a consequence of this, we deduce the bound:%
\begin{align*}
&  \sum_{\substack{\left(  l,s\right)  \in\mathbb{Z}^{d+1},\\l\neq0}%
}\sum_{\left(  j_{1},...,j_{\delta+1}\right)  \in\left(  H_{\left(
l,s\right)  }\right)  ^{\delta+1}\cap\mathcal{V}_{r}^{\delta}}%
{\displaystyle\prod_{i=1}^{r+1}}
\left\vert a_{j_{\alpha_{i}}}\right\vert ^{2}%
{\displaystyle\prod_{i=1}^{\delta-r}}
\left\vert a_{j_{\beta_{i}}+l}\right\vert ^{2}\\
&  \leq\dbinom{\delta}{r}\sum_{\left(  \tilde{j}_{1},...,\tilde{j}_{\delta
+1}\right)  \in\mathcal{V}_{r+1}^{\delta}}%
{\displaystyle\prod_{\sigma=1}^{\delta+1}}
\left\vert a_{\tilde{j}_{\sigma}}\right\vert ^{2}\leq\dbinom{\delta}%
{r}\left\Vert u\right\Vert _{L^{2}\left(  \mathbb{T}^{d}\right)  }^{2\left(
\delta+1\right)  }.
\end{align*}
The same estimate can be established for the sums corresponding to the second
summands in (\ref{e:ts}); therefore:
\[
T_{r,\delta}\left(  u\right)  \leq\dbinom{\delta}{r}\left\Vert u\right\Vert
_{L^{2}\left(  \mathbb{T}^{d}\right)  }^{2\left(  \delta+1\right)  }.
\]

\end{proof}

Now, applying Lemma \ref{l:estT} with $\delta=d$ we obtain:%
\[
\sum_{\left(  l,s\right)  \in\mathbb{Z}^{d+1}}\left\vert b_{u}\left(
l,s\right)  \right\vert ^{d+1}=\left\Vert u\right\Vert _{L^{2}\left(
\mathbb{T}^{d}\right)  }^{2\left(  d+1\right)  }+\sum_{r=0}^{d-1}%
T_{r,d}\left(  u\right)  \leq2^{d}\left\Vert u\right\Vert _{L^{2}\left(
\mathbb{T}^{d}\right)  }^{2\left(  d+1\right)  },
\]
which concludes the proof of Theorem \ref{t:ld}.

\subsection{Estimates for the density-matrix Schr\"{o}dinger equation}

Denote by $\mathcal{L}_{+}^{1}$ the set of symmetric, non-negative,
trace-class operators on $L^{2}\left(  \mathbb{T}^{d}\right)  $. Every
operator $A\in\mathcal{L}_{+}^{1}$ is defined by an integral kernel $\rho\in
L^{2}\left(  \mathbb{T}^{d}\times\mathbb{T}^{d}\right)  $,%
\[
\rho\left(  x,y\right)  =\sum_{k,j}\widehat{\rho}_{k,j}e^{ik\cdot
x}e^{-ij\cdot y}.
\]
The self-adjointness of $A$ implies that $\rho$ is symmetric:%
\[
\rho\left(  x,y\right)  =\overline{\rho\left(  y,x\right)  },
\]
and from the fact that $A$ is non-negative and trace-class we deduce that%
\[
t_{\rho}\left(  x\right)  :=\sum_{k,j}\widehat{\rho}_{k,j}e^{i\left(
k-j\right)  \cdot x}%
\]
is a non-negative function in $L^{1}\left(  \mathbb{T}^{d}\right)  $.

When no confusion arises, we shall use $\rho$ to refer both to a generic
operator in $\mathcal{L}_{+}^{1}$ and to its integral kernel. Note that, since
operators $\rho\in\mathcal{L}_{+}^{1}$ are non-negative and symmetric, their
trace norm is given by:%
\[
\left\Vert \rho\right\Vert _{\mathcal{L}^{1}}:=\operatorname*{Tr}\left(
\rho\right)  =\left\Vert t_{\rho}\right\Vert _{L^{1}\left(  \mathbb{T}%
^{d}\right)  }.
\]
Moreover, if $\rho\left(  x,y\right)  =u\left(  x\right)  \overline{u\left(
y\right)  }$ is the orthogonal projection on a function $u\in L^{2}\left(
\mathbb{T}^{d}\right)  $ then $\rho\in\mathcal{L}_{+}^{1}$ and $t_{\rho
}=\left\vert u\right\vert ^{2}$. Usually, one refers to the elements of
$\mathcal{L}_{+}^{1}$ as \emph{density matrices}.

The Schr\"{o}dinger equation for density matrices (also known as the
Heisenberg-Von Neumann equation) is:%
\begin{equation}
i\partial_{t}A\left(  t\right)  +\left[  \Delta,A\left(  t\right)  \right]
=0,\quad A\left(  0\right)  =\rho\in\mathcal{L}_{+}^{1}, \label{e:H-N}%
\end{equation}
where $\left[  \cdot,\cdot\right]  $ denotes the commutator bracket. Clearly,%
\[
A\left(  t\right)  =e^{it\Delta}\rho e^{-it\Delta},
\]
so, as $e^{it\Delta}$ is unitary, $A\left(  t\right)  \in\mathcal{L}_{+}^{1}$
and $\left\Vert A\left(  t\right)  \right\Vert _{\mathcal{L}^{1}}=\left\Vert
\rho\right\Vert _{\mathcal{L}^{1}}$ for every $t\in\mathbb{R}$ . Note that
$A\left(  t\right)  $ is $2\pi\mathbb{Z}$-periodic in $t$.

When $\rho\left(  x,y\right)  =u\left(  x\right)  \overline{u\left(  y\right)
}$, the integral kernel of $e^{it\Delta}\rho e^{-it\Delta}$ is $\psi\left(
x,t\right)  \overline{\psi\left(  y,t\right)  }$ where $\psi\left(
\cdot,t\right)  :=e^{it\Delta}u$, and $t_{e^{it\Delta}\rho e^{-it\Delta}%
}=\left\vert e^{it\Delta}u\right\vert ^{2}$. Therefore, the dynamics of
equation (\ref{e:H-N}) reduce to those of (\ref{def:psi}) in that case.

For a given $\rho\in\mathcal{L}_{+}^{1}$, we shall denote by $b_{\rho}$ the
Fourier coefficients of $t_{e^{it\Delta}\rho e^{-it\Delta}}$:%
\[
t_{e^{it\Delta}\rho e^{-it\Delta}}\left(  x\right)  =\sum_{\left(  l,s\right)
\in\mathbb{Z}^{d}\times\mathbb{Z}}b_{\rho}\left(  l,s\right)  e^{i\left(
l,s\right)  \cdot\left(  x,t\right)  }.
\]

We now introduce some special classes of initial data in $\mathcal{L}_{+}^{1}%
$. Let $\Lambda$ be a submodule of $\mathbb{Z}^{d}$, let $\operatorname*{rk}%
\Lambda$ denote the rank of $\Lambda$. We define $\mathcal{L}_{+}^{1}\left(
\Lambda\right)  $ as the set consisting of the $\rho\in\mathcal{L}_{+}^{1}$
such that the integral kernel of $\rho$ is invariant by translations in
directions in $\Lambda^{\perp}$ (this is the linear subspace orthogonal to
$\Lambda$); in other words:%
\begin{equation}
\rho\left(  x+v,y+w\right)  =\rho\left(  x,y\right)  ,\qquad\forall
v,w\in\Lambda^{\perp}. \label{e:sym}%
\end{equation}
Clearly, the classes $\mathcal{L}_{+}^{1}\left(  \Lambda\right)  $ are
invariant by the dynamics of (\ref{e:H-N}). The following result holds.

\begin{theorem}
\label{t:lddm}Let $\Lambda\subset\mathbb{Z}^{d}$ be a submodule with
$\operatorname*{rk}\Lambda>0$. Then there exists a constant
$C_{\operatorname*{rk}\Lambda}>0$ such that:
\[
\left\Vert b_{\rho}\right\Vert _{l^{\operatorname*{rk}\Lambda+1}\left(
\mathbb{Z}^{d+1}\right)  }\leq C_{\operatorname*{rk}\Lambda}\left\Vert
t_{\rho}\right\Vert _{L^{1}\left(  \mathbb{T}^{d}\right)  },
\]
for every $\rho\in\mathcal{L}_{+}^{1}\left(  \Lambda\right)  $.
\end{theorem}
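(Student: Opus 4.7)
The plan is to reduce the estimate for $\rho$ to a lower-dimensional instance of the argument in Section \ref{subsec:proof-ld}, with the drop in dimension from $d+1$ to $\operatorname{rk}\Lambda+1=:r+1$ driven by the fact that the symmetry (\ref{e:sym}) forces $\widehat{\rho}_{k,j}=0$ unless both $k,j$ lie in the rank-$r$ sublattice $\Lambda':=\operatorname{span}_{\mathbb{R}}(\Lambda)\cap\mathbb{Z}^d$.

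The first step is to pass from the operator $\rho$ to a scalar majorant. A direct computation shows that the density-matrix analogue of (\ref{e:msquare}) is
\[
b_\rho(l,s)=\sum_{\substack{k-j=l\\|j|^2-|k|^2=s}}\widehat{\rho}_{k,j}.
\]
Since $\rho\geq 0$ is trace class, Cauchy--Schwarz applied to its spectral decomposition gives $|\widehat{\rho}_{k,j}|^2\leq\widehat{\rho}_{k,k}\widehat{\rho}_{j,j}$. Setting $b_k:=\sqrt{\widehat{\rho}_{k,k}}$, one has $\sum_k b_k^2=\operatorname{Tr}\rho=\|t_\rho\|_{L^1(\mathbb{T}^d)}$, and the $\Lambda$-symmetry forces $b_k=0$ for $k\notin\Lambda'$. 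Introducing the auxiliary function $u(x):=\sum_{k\in\Lambda'}b_k e^{ik\cdot x}$, which satisfies $\|u\|_{L^2}^2=\|t_\rho\|_{L^1}$, the pointwise bound $|b_\rho(l,s)|\leq b_u(l,s)$ reduces the theorem to establishing $\|b_u\|_{l^{r+1}(\mathbb{Z}^{d+1})}\leq C_r\|u\|_{L^2}^2$ for this specific non-negative $u$.

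The second step is to rerun the geometric analysis of Section \ref{subsec:proof-ld}, adapted to the sublattice $\Lambda'$. The only contributing nonzero chords $(l,s)$ have $l\in\Lambda'$, and the relevant index set $H_{(l,s)}\cap\Lambda'$ is contained in the intersection of the affine hyperplane $H_{(l,s)}$ with the $r$-dimensional real subspace $\operatorname{span}_{\mathbb{R}}(\Lambda')$; since $l$ is a nonzero direction inside this subspace, this intersection is an affine subspace of $\operatorname{span}_{\mathbb{R}}(\Lambda')$ of dimension at most $r-1$. Consequently, any $r+1$ points of $H_{(l,s)}\cap\Lambda'$ span an affine variety of dimension at most $r-1$, and the decomposition leading to (\ref{e:bd})--(\ref{e:rfixed}), run with $\delta=r$ in place of $d$, gives
\[
\sum_{(l,s)\in\mathbb{Z}^{d+1},\,l\neq 0}|b_u(l,s)|^{r+1}\leq\sum_{r'=0}^{r-1}T_{r',r}(u).
\]
Invoking Lemma \ref{l:estT} with $\delta=r$ (legitimate because $r\leq d$) yields $T_{r',r}(u)\leq\binom{r}{r'}\|u\|_{L^2}^{2(r+1)}$, and combining with the contribution $b_u(0,0)^{r+1}=\|u\|_{L^2}^{2(r+1)}$ at $l=0$ produces $\|b_u\|_{l^{r+1}}^{r+1}\leq 2^r\|u\|_{L^2}^{2(r+1)}$, which finishes the argument.

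I expect the main conceptual step to be the Cauchy--Schwarz majorisation $|\widehat{\rho}_{k,j}|\leq b_k b_j$: this is what converts a density-matrix estimate into an estimate for a scalar function while simultaneously preserving both the Fourier-support restriction to $\Lambda'$ and the correct right-hand-side norm $\|t_\rho\|_{L^1}$. Once this majorisation is in place, the remaining work is just a rank-$r$ repetition of the proof of Theorem \ref{t:ld}, and Lemma \ref{l:estT} was deliberately stated (allowing arbitrary $\delta\leq d$) in sufficient generality to handle exactly this reduced situation.
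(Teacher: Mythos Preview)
Your argument is correct, and the second step---the rank-$r$ rerun of Section~\ref{subsec:proof-ld} via Lemma~\ref{l:estT} with $\delta=\operatorname{rk}\Lambda$---is exactly what the paper does. The difference lies in how you pass from the density matrix to the scalar case. The paper takes the full spectral decomposition $\rho=\sum_n\lambda_n u_n\otimes\overline{u_n}$, observes that each eigenfunction $u_n$ inherits the Fourier-support restriction, applies the scalar bound $\|b_{u_n}\|_{l^{r+1}}\leq C_r$ to every $u_n$, and then uses $b_\rho=\sum_n\lambda_n b_{u_n}$ together with the triangle inequality and $\sum_n\lambda_n=\|t_\rho\|_{L^1}$. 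You instead build a \emph{single} scalar majorant $u$ from the diagonal via the operator-positivity Cauchy--Schwarz inequality $|\widehat{\rho}_{k,j}|\leq\sqrt{\widehat{\rho}_{k,k}\,\widehat{\rho}_{j,j}}$, obtain the pointwise domination $|b_\rho(l,s)|\leq b_u(l,s)$, and apply the scalar bound once. Your route is slightly more economical (one auxiliary function rather than a countable family) and makes transparent that only the diagonal of $\rho$ and its positivity are used; the paper's route is perhaps more conceptually modular, separating cleanly the ``density-matrix-to-pure-state'' reduction from the lattice geometry. Both yield the same constant $C_{\operatorname{rk}\Lambda}=2^{\operatorname{rk}\Lambda/(\operatorname{rk}\Lambda+1)}$. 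Your use of the saturation $\Lambda'=\operatorname{span}_{\mathbb{R}}(\Lambda)\cap\mathbb{Z}^d$ is also a small improvement in precision over the paper, which writes $\Lambda$ but really needs $\Lambda'$ when $\Lambda$ is not primitive.
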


\begin{proof}
Let $\left(  u_{n}\right)  $ be an orthonormal basis of $L^{2}\left(
\mathbb{T}^{d}\right)  $ consisting of eigenfunctions of $\rho$. One has%
\[
\rho\left(  x,y\right)  =\sum_{i=1}^{\infty}\lambda_{n}u_{n}\left(  x\right)
\overline{u_{n}\left(  y\right)  },
\]
with $\lambda_{n}\geq0$, $n\in\mathbb{N}$, and $\sum_{n=1}^{\infty}\lambda
_{n}=\operatorname{Tr}\left(  \rho\right)  =\left\Vert t_{\rho}\right\Vert
_{L^{1}\left(  \mathbb{T}^{d}\right)  }$. Clearly,%
\begin{equation}
b_{\rho}=\sum_{n=1}^{\infty}\lambda_{n}b_{u_{n}}; \label{e:buj}%
\end{equation}
moreover, since $\rho$ satisfies (\ref{e:sym}) and $\Lambda$ is a submodule of
$\mathbb{Z}^{d}$ necessarily each $u_{n}$ is of the form:%
\[
u_{n}\left(  x\right)  =\sum_{k\in\Lambda}a_{k}^{n}e^{ik\cdot x},
\]
and $b_{u_{n}}\left(  l,s\right)  =0$ whenever $l\notin\Lambda$. Note moreover
that for $l\in\Lambda\setminus\left\{  0\right\}  $ one has that $\Lambda\cap
H_{\left(  l,s\right)  }$ spans an affine variety of dimension strictly less
than $\operatorname*{rk}\Lambda$ (since $\left\{  nl\right\}  _{n\in
\mathbb{Z}}$ is not contained in $H_{\left(  l,s\right)  }$). This implies
that $\left(  \Lambda\cap H_{\left(  l,s\right)  }\right)
^{\operatorname*{rk}\Lambda+1}\cap\mathcal{V}_{\operatorname*{rk}\Lambda
}^{\operatorname*{rk}\Lambda}=\emptyset$ and%
\[
\left\vert b_{u_{n}}\left(  l,s\right)  \right\vert ^{\operatorname*{rk}%
\Lambda+1}\leq\sum_{r=0}^{\operatorname*{rk}\Lambda-1}\sum_{\left(
j_{1},...,j_{\operatorname*{rk}\Lambda+1}\right)  \in\left(  \Lambda\cap
H_{\left(  l,s\right)  }\right)  ^{\operatorname*{rk}\Lambda+1}\cap
\mathcal{V}_{r}^{\operatorname*{rk}\Lambda}}%
{\displaystyle\prod_{\sigma=1}^{\operatorname*{rk}\Lambda+1}}
\left\vert a_{j_{\sigma}+l}^{n}a_{j_{\sigma}}^{n}\right\vert .
\]
With this in mind, one can apply Lemma \ref{l:estT} with $\delta
=\operatorname*{rk}\Lambda$ to conclude, as in Theorem \ref{t:ld}:%
\[
\sum_{\substack{\left(  l,s\right)  \in\mathbb{Z}^{d+1}\\l\in\Lambda
}}\left\vert b_{u_{n}}\left(  l,s\right)  \right\vert ^{\operatorname*{rk}%
\Lambda+1}=1+\sum_{r=0}^{\operatorname*{rk}\Lambda-1}T_{r,\operatorname*{rk}%
\Lambda}\left(  u_{n}\right)  \leq2^{\operatorname*{rk}\Lambda}.
\]
Therefore, using (\ref{e:buj}) we obtain the estimate:%
\[
\left\Vert b_{\rho}\right\Vert _{l^{\operatorname*{rk}\Lambda+1}\left(
\mathbb{Z}^{d+1}\right)  }\leq\sum_{n=0}^{\infty}\lambda_{n}\left\Vert
b_{u_{n}}\right\Vert _{l^{\operatorname*{rk}\Lambda+1}\left(  \mathbb{Z}%
^{d+1}\right)  }\leq C_{\operatorname*{rk}\Lambda}\sum_{n=0}^{\infty}%
\lambda_{n}=C_{\operatorname*{rk}\Lambda}\left\Vert t_{\rho}\right\Vert
_{L^{1}\left(  \mathbb{T}^{d}\right)  }.
\]

\end{proof}

\section{Regularity of the limits\label{s:reg}}

We start by giving the proof of Theorem \ref{t:limit}, which relies on Theorem
\ref{t:lddm} and the results from \cite{AnantharamanMacia}.

\subsection{Proof of Theorem \ref{t:limit}}

Without loss of generality, we can assume that (\ref{e:wc}) holds for the
whole sequence $\left(  u_{n}\right)  $. Let $\left(  h_{n}\right)  $ the
sequence appearing in property (\textbf{S}). Denote by $\nu$ the weak-$\ast$
limit of $\left(  \left\vert e^{it\Delta}u_{n}\right\vert ^{2}\right)  $.
Theorem 3 in \cite{AnantharamanMacia} ensures that,
\begin{equation}
\nu\left(  x,t\right)  =\sum_{\Lambda}t_{e^{it\Delta}\rho_{\Lambda
}e^{-it\Delta}}\left(  x\right)  , \label{e:snu}%
\end{equation}
where the sum ranges over all (primitive) submodules of $\mathbb{Z}^{d}$, and
the operators $\rho_{\Lambda}\in\mathcal{L}_{+}^{1}\left(  \Lambda\right)  $
only depend on the sequence of initial data $\left(  u_{n}\right)  $. Because
the functions $t_{\rho_{\Lambda}}$ are non-negative, we have:%
\[
\sum_{\Lambda}\left\Vert t_{\rho_{\Lambda}}\right\Vert _{L^{1}\left(
\mathbb{T}^{d}\right)  }=\sum_{\Lambda}\left\Vert t_{e^{it\Delta}\rho
_{\Lambda}e^{-it\Delta}}\right\Vert _{L^{1}\left(  \mathbb{T}^{d}\right)
}=\nu\left(  \mathbb{T}^{d},t\right)  \leq\limsup_{n\rightarrow\infty
}\left\Vert u_{n}\right\Vert _{L^{2}\left(  \mathbb{T}^{d}\right)  }^{2}.
\]
We claim that if hypothesis (\textbf{S}) holds, then $\rho_{\mathbb{Z}^{d}}%
=0$. In that case, only submodules of rank strictly less than $d$ appear in
the sum (\ref{e:snu}). Therefore, we can apply Theorem \ref{t:lddm} and find:%
\[
\left\Vert \widehat{\nu}\right\Vert _{l^{d}\left(  \mathbb{Z}^{d+1}\right)
}\leq\sum_{\Lambda}\left\Vert b_{\rho_{\Lambda}}\right\Vert _{l^{d}\left(
\mathbb{Z}^{d+1}\right)  }\leq C\sum_{\Lambda}\left\Vert t_{\rho_{\Lambda}%
}\right\Vert _{L^{1}\left(  \mathbb{T}^{d}\right)  }<\infty.
\]
We finally show that $\rho_{\mathbb{Z}^{d}}=0$. Hypothesis (\textbf{S})
implies that any limit $\mu_{0}\in\mathcal{M}_{+}\left(  \mathbb{T}^{d}%
\times\mathbb{R}^{d}\right)  $ of the sequence of Wigner distributions
$w_{u_{n}}^{h_{n}}$ (in the notation of \cite{AnantharamanMacia}) satisfies:
\[
\mu_{0}\left(  \mathbb{T}^{d}\times\left\{  0\right\}  \right)  =0.
\]
The reader may find the standard argument in, for instance,
\cite{GerardSobolev}. One can then apply Corollary 30 in
\cite{AnantharamanMacia} and conclude the proof.

\subsection{Additional regularity results}

It is possible to replace (\textbf{S}) by a family of stronger conditions that
ensure additional regularity on the weak-$\ast$ accumulation points of orbits
of the Schr\"{o}dinger flow. In order to introduce them we must recall some
notations from \cite{AnantharamanMacia}.

Denote by $\Omega_{j}\subset\mathbb{R}^{d}$, for $j=0,...,d$, the set of
resonant vectors of order exactly $j$, that is:%
\[
\Omega_{j}:=\left\{  \xi\in\mathbb{R}^{d}:\operatorname*{rk}\Lambda_{\xi
}=d-j\right\}  ,
\]
where $\Lambda_{\xi}:=\left\{  k\in\mathbb{Z}^{d}:k\cdot\xi=0\right\}  $. A
classical result shows that $\xi\in\Omega_{j}$ if and only if for any
$x\in\mathbb{T}^{d}$ the geodesic $\tau\mapsto x+\tau\xi$ is dense in a
subtorus of $\mathbb{T}^{d}$ of dimension $j$.

For each $r=1,...,d+1$ we introduce the following condition (\textbf{S}$_{r}$)
on the sequence of initial data $\left(  u_{n}\right)  $:

\begin{itemize}
\item[(\textbf{S}$_{r}$)] There exists a sequence of positive reals $\left(
h_{n}\right)  $ tending to zero such that (\ref{e:ho2}) holds and every
accumulation point $\mu_{0}\in\mathcal{M}_{+}\left(  \mathbb{T}^{d}%
\times\mathbb{R}^{d}\right)  $ of the sequence of Wigner distributions
$\left(  w_{u_{n}}^{h_{n}}\right)  $ (as defined in \cite{AnantharamanMacia})
satisfies:%
\[
\mu_{0}\left(  \mathbb{T}^{d}\times%
{\displaystyle\bigcup_{j<r}}
\Omega_{j}\right)  =0.
\]

\end{itemize}

Note that the condition (\textbf{S}) is equivalent to (\textbf{S}$_{1}$);
moreover, (\textbf{S}$_{d+1}$) is equivalent to the fact that $\left(
u_{n}\right)  $ converges strongly to $0$ in $L^{2}\left(  \mathbb{T}%
^{d}\right)  $. Condition (\textbf{S}$_{r}$) roughly states that no fraction
of the $L^{2}$-norm of the $u_{n}$ concentrates on resonances of order stricly
less than $r$ as $n\rightarrow\infty$.

In \cite{MaciaAv} it is shown that whenever (\textbf{S}$_{d}$) holds, we have
that every weak-$\ast$ accumulation point is constant.

The following generalization of Theorem \ref{t:limit} is true.

\begin{theorem}
\label{t:limitr}Suppose that $\left(  u_{n}\right)  $ is a bounded sequence in
$L^{2}\left(  \mathbb{T}^{d}\right)  $ that satisfies condition (\textbf{S}%
$_{r}$) for some $r=1,...,d-1$. Let $\nu\in L^{\infty}\left(  \mathbb{T}%
;L^{1}\left(  \mathbb{T}^{d}\right)  \right)  $ be a weak-$\ast$ accumulation
point of $\left(  \left\vert e^{it\Delta}u_{n}\right\vert ^{2}\right)  $.
Then:%
\[
\left\Vert \widehat{\nu}\right\Vert _{l^{d+1-r}\left(  \mathbb{Z}%
^{d+1}\right)  }<\infty.
\]

\end{theorem}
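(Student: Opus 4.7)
My plan is to rerun the argument used for Theorem \ref{t:limit}, tracking how the stronger hypothesis (\textbf{S}$_{r}$) refines the structural decomposition of $\nu$ furnished by \cite{AnantharamanMacia}. After passing to a subsequence, I first invoke Theorem 3 of \cite{AnantharamanMacia} to obtain the decomposition
\[
\nu(x,t)=\sum_{\Lambda} t_{e^{it\Delta}\rho_{\Lambda}e^{-it\Delta}}(x),
\]
with $\Lambda$ ranging over primitive submodules of $\mathbb{Z}^{d}$, $\rho_{\Lambda}\in\mathcal{L}_{+}^{1}(\Lambda)$ depending only on $(u_{n})$, and $\sum_{\Lambda}\|t_{\rho_{\Lambda}}\|_{L^{1}(\mathbb{T}^{d})}\leq\limsup_{n}\|u_{n}\|_{L^{2}(\mathbb{T}^{d})}^{2}$.

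The central step will be to establish that (\textbf{S}$_{r}$) forces $\rho_{\Lambda}=0$ for every $\Lambda$ with $\operatorname*{rk}\Lambda>d-r$. The heuristic is that a primitive submodule of rank $d-j$ is driven by the portion of the semiclassical measure $\mu_{0}$ supported on frequencies $\xi$ with $\Lambda_{\xi}\supseteq\Lambda$, so that the relevant $\xi$ lie in $\Omega_{j'}$ for some $j'\leq j$. If $\operatorname*{rk}\Lambda\geq d-r+1$ (equivalently $j\leq r-1$), then all such frequencies sit inside $\bigcup_{j'<r}\Omega_{j'}$, which is annihilated by $\mu_{0}$ under (\textbf{S}$_{r}$). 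For $r=1$ this reduces to the single application of Corollary 30 of \cite{AnantharamanMacia} that was used in Theorem \ref{t:limit}; for general $r$ one must implement the same vanishing stratum by stratum, extracting from the constructive part of \cite{AnantharamanMacia} the identification of each $\rho_{\Lambda}$ with the appropriate restriction of $\mu_{0}$ to the corresponding resonance stratum.

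Once the vanishing is secured, the rest is routine. For each surviving $\Lambda$ with $1\leq\operatorname*{rk}\Lambda\leq d-r$, Theorem \ref{t:lddm} gives $\|b_{\rho_{\Lambda}}\|_{l^{\operatorname*{rk}\Lambda+1}(\mathbb{Z}^{d+1})}\leq C_{\operatorname*{rk}\Lambda}\|t_{\rho_{\Lambda}}\|_{L^{1}(\mathbb{T}^{d})}$, and since $\operatorname*{rk}\Lambda+1\leq d+1-r$, the discrete embedding $l^{p}\hookrightarrow l^{q}$ for $p\leq q$ controls $\|b_{\rho_{\Lambda}}\|_{l^{d+1-r}}$ by the same quantity. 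The component $\rho_{\{0\}}$, if nonzero, yields a kernel constant in $x$, so its Fourier data is supported at the origin of $\mathbb{Z}^{d+1}$ and lies trivially in every $l^{p}$. Summing these contributions against the trace bound recalled above produces $\|\widehat{\nu}\|_{l^{d+1-r}(\mathbb{Z}^{d+1})}<\infty$, as required.

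The principal obstacle is the vanishing step. The $r=1$ proof disposed of a single module ($\Lambda=\mathbb{Z}^{d}$) by a direct appeal to Corollary 30 of \cite{AnantharamanMacia}; the general $r$ case requires knowing, for each primitive $\Lambda$ of rank exceeding $d-r$, exactly which piece of $\mu_{0}$ feeds $\rho_{\Lambda}$ and then confirming that (\textbf{S}$_{r}$) annihilates that piece. This bookkeeping is the only step that departs meaningfully from the proof of Theorem \ref{t:limit}; all the analytic content beyond it is already contained in Theorem \ref{t:lddm} and the discrete $l^{p}$ embedding.
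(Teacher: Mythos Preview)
Your proposal is correct and follows essentially the same route as the paper: decompose $\nu$ via Theorem~3 of \cite{AnantharamanMacia}, kill the $\rho_{\Lambda}$ with $\operatorname*{rk}\Lambda>d-r$ using hypothesis (\textbf{S}$_{r}$), and then apply Theorem~\ref{t:lddm} to the surviving terms. The only remark worth making is that the paper treats the vanishing step as a single direct citation of Corollary~30 in \cite{AnantharamanMacia} rather than the stratum-by-stratum bookkeeping you anticipate; your explicit mention of the $l^{p}\hookrightarrow l^{q}$ embedding and the separate treatment of $\Lambda=\{0\}$ are details the paper leaves implicit.
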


The proof of this result follows exactly the lines of that of Theorem
\ref{t:limit}; except that hypothesis (\textbf{S}$_{r}$) is now used, via
Corollary 30 in \cite{AnantharamanMacia}, to ensure that $\rho_{\Lambda}=0$
for every primitive submodule $\Lambda\subset\mathbb{Z}^{d}$ of rank strictly
greater than $d-r$.



\begin{thebibliography}{99}                                                                                               %


\bibitem {Aissiou}Tayeb A{\"i}ssiou. \newblock Semiclassical limits of
eigenfunctions on flat $n$-dimensional tori.
\newblock {\em Canad. Math. Bull.}, 2011. \newblock to appear.

\bibitem {AnantharamanMacia}Nalini Anantharaman and Fabricio Maci\`{a}.
\newblock Semiclassical measures for the {S}chr\"{o}dinger equation on the
torus. \newblock2010. \newblock Preprint arXiv:1005.0296v2.

\bibitem {BourgainStrichartz93}Jean~Bourgain. \newblock Fourier transform
restriction phenomena for certain lattice subsets and applications to
nonlinear evolution equations. {I}. {S}chr\"{o}dinger equations.
\newblock {\em Geom. Funct. Anal.}, 3(2):107--156, 1993.

\bibitem {BourgainQL97}Jean Bourgain. \newblock Analysis results and problems
related to lattice points on surfaces. \newblock In \emph{Harmonic analysis
and nonlinear differential equations ({R}iverside, {CA}, 1995)}, volume 208 of
\emph{Contemp. Math.}, pages 85--109. Amer. Math. Soc., Providence, RI, 1997.

\bibitem {Bourgain11}Jean Bourgain. \newblock Moment inequalities for
trigonometric polynomials with spectrum in curved hypersurfaces.
\newblock 2011. \newblock Preprint arXiv:1107.1129v1.

\bibitem {CookeTorus2d}Roger Cooke. \newblock A {C}antor-{L}ebesgue theorem in
two dimensions. \newblock {\em Proc. Amer. Math. Soc.}, 30:547--550, 1971.

\bibitem {GerardSobolev}Patrick G{\'e}rard. \newblock Description du d\'efaut
de compacit\'e de l'injection de {S}obolev.
\newblock {\em ESAIM Control Optim. Calc. Var.}, 3:213--233 (electronic), 1998.

\bibitem {JakobsonTori97}Dmitry Jakobson. \newblock Quantum limits on flat
tori. \newblock {\em Ann. of Math. (2)}, 145(2):235--266, 1997.

\bibitem {MaciaAv}Fabricio Maci{\`a}. \newblock Semiclassical measures and the
{S}chr\"odinger flow on {R}iemannian manifolds. \newblock {\em Nonlinearity},
22(5):1003--1020, 2009.

\bibitem {MaciaTorus}Fabricio Maci\`{a}. \newblock High-frequency propagation
for the {S}chr\"odinger equation on the torus.
\newblock {\em J. Funct. Anal.}, 258(3):933--955, 2010.

\bibitem {JakNadToth01}Nikolai~Nadirashvili, John~Toth, and Dmitry~Jakobson.
\newblock Geometric properties of eigenfunctions.
\newblock {\em Russian Math. Surveys}, 56(6):1085--1105, 2001.

\bibitem {Zygmund74}Antoni~Zygmund. \newblock On {F}ourier coefficients and
transforms of functions of two variables. \newblock {\em Studia Math.},
50:189--201, 1974.
\end{thebibliography}
\end{document}